\documentclass[10pt]{amsart}
\usepackage{amsfonts}
\usepackage{amsmath,amssymb}
\usepackage{amsthm}
\usepackage{ascmac}
\usepackage{graphicx}
\usepackage{multirow}
\usepackage{amscd}
\usepackage{indentfirst,csquotes}

\baselineskip=16pt
\topmargin= .5cm
\textheight= 20cm
\textwidth= 32cc
\baselineskip=16pt
\evensidemargin= .9cm
\oddsidemargin= .9cm

\begin{document}
\title{Equivariant crossing numbers for two-bridge knots}
\author{Jundai Nanasawa}
\date{}
\address{Department of Pure and Applied Mathematics, School of Fundamental Science and
Engineering, Waseda University, 3-4-1 Okubo, Shinjuku, Tokyo 169-8555, Japan}
\email{jun15@fuji.waseda.jp}

\newtheorem{dfn}{Definition}
\newtheorem{thm}{Theorem}
\newtheorem{prop}{Proposition}
\newtheorem{lem}{Lemma}
\newtheorem{qst}{Problem}
\newtheorem{ex}{Example}
\newtheorem{cor}{Corollary}
\newtheorem{conj}{Conjecture}
\renewcommand{\figurename}{Figure }
\renewcommand{\tablename}{Table }
\renewcommand{\refname}{References}
\renewcommand{\abstractname}{Abstract}

\let\thefootnote\relax
\footnotetext{{\it 2010 Mathematics Subject Classification.} 57M25, 57M27.}
\footnotetext{{\it Keywords.} symmetry, strongly invertible knot, crossing number, equivariant.}

\begin{abstract}
Symmetries of knots have been studied extensively, and strongly invertible knots are one of them. Lamm defined the equivariant crossing number $c_t(K)$, the minimum crossing number among all symmetric diagrams for a strongly invertible knot $K$. In this paper, we define $c_2(K)$ for two-bridge knots by restricting diagrams to two types. This gives an upper bound for $c_t(K)$. We give an algorithm to determine $c_2(K)$ for any two-bridge knot. The results of calculation by a computer up to 14 crossings are shown. As a corollary, we show 20 examples of knots up to 10 crossings in Rolfsen's knot table whose symmetry can be improved without increasing the number of crossings.
\end{abstract}
\maketitle

\section{Introduction}
\label{sec:intro}
There are many kinds of symmetries in knot theory. For a smooth knot $K$ in $S^3$, we focus on symmetries that are realized by an orientation preserving periodic homeomorphism $h$ from the pair $(S^3,K)$ to itself. Smith \cite{smith} proved that possible forms of fixed point set ${\rm Fix}(h)$ are $\emptyset$, $S^0$, $S^1$, and $S^2$. Furthermore, Fox \cite{fox} showed ${\rm Fix}(h)\cap K$ is homeomorphic to either $\emptyset$, $S^0$, or $S^1$. In particular, in the case that ${\rm Fix}(h)$ is homeomorphic to $S^1$, ${\rm Fix}(h)$ can be regarded as an unknotted circle by the positive solution of the Smith conjecture \cite{morgan}. Therefore, for a nontrivial knot, ${\rm Fix}(h)\cap K$ is homeomorphic to $\emptyset$ or $S^0$. These two symmetries are said to be {\it (cyclically) periodic} and {\it strongly invertible}, respectively. In this paper, we focus on strongly invertible knots.\par
 A smooth knot $K$ in $S^3$ is said to be {\it strongly invertible} if there is an orientation preserving homeomorphism $h:(S^3,K)\to (S^3,K)$ which satisfies the following conditions:\\
\quad 1. the set $F$ of fixed points of $h$ is homeomorphic to $S^1$,\\
\quad 2. $F$ intersects $K$ in two points,\\
\quad 3. $h^2=id$ and $h\neq id$.\\
Lamm introduced a new invariant $c_t(K)$, the minimum number of crossings among all symmetric diagrams for a strongly invertible knot $K$ \cite{lamm2}. The invariant $c_t(K)$ is called the {\it equivariant crossing number} of $K$ in this paper. Then the above question refers to the difference between $c_t(K)$ and the usual minimum crossing number $c(K)$. For two-bridge knots, Lamm conjectured that $c_t(K)=c(K)$ if and only if $K$ is one of the two types in Figure \ref{fig:lamm} (Conjecture \ref{conj:lamm}). This conjecture is based on Proposition 3.6 in \cite{sakuma} (Proposition \ref{prop:sakuma} in this paper). Lamm's conjecture has not been proved yet, but the types of diagrams in the conjecture can be used to give an upper bound for $c_t(K)$.\par
 To grasp $c_t(K)$ for two-bridge knots, we define the {\it two-bridge equivariant crossing number} $c_2(K)$ by restricting diagrams to two types in Lamm's conjecture. We can use the continued fraction expansion of the slope $p/q$ to detect $c_2(K)$ for a two-bridge knot $K(p,q)$. The calculation of $c_2(K)$ gives an upper bound for $c_t(K)$.\par
 It is well known that any two-bridge knot can be written as a continued fraction expansion consisting of only even integers. We call this an {\it even expansion}. However, an even expansion is not efficient as an upper bound for $c_2(K)$. We define a {\it semi-even expansion}, an expansion corresponding to one of the two types in Lamm's conjecture. This expansion can be easily done by ignoring the parities of integers in odd positions of the even expansion. We prove that the semi-even expansion is superior to the even expansion as an upper bound for $c_2(K)$. This enables us to obtain a finite algorithm to determine $c_2(K)$. The results using a computer up to $c(K)=14$ are shown. If we focus on two-bridge knots $K$ with $c_2(K)=c(K)$, there is a symmetric diagram whose number of crossings is minimum. Even if the diagram is not symmetric at first, a symmetric diagram of the knot can be easily obtained from the result of the expansion. 20 such examples up to 10 crossings are found in Rolfsen's knot table \cite{rolfsen}.\par
This paper is organized as follows. In Section \ref{sec:even}, we explain the even expansion of a slope. This is used in Section \ref{sec:lamm} to explain Lamm's conjecture. In Section \ref{sec:semieven}, we focus on the semi-even expansion. In Section \ref{sec:results}, we give the results of the calculation for $c_2(K)$ and its corollary. The details of the algorithm to calculate $c_2(K)$ are given in Appendix.\par
\section{Even expansion}
\label{sec:even}
In this section, we describe the expansion of a rational number consisting of only even integers. We call this an {\it even expansion}. An ordinary expansion consisting of only positive integers is called a {\it positive expansion}. Note that a positive expansion is unique for each rational number. Throughout this paper, we regard two knots $K_1,K_2$ as the same knot if and only if there is a homeomorphism from $(S^3,K_1)$ to $(S^3,K_2)$. The homeomorphism does not need to preserve the orientation of $S^3$ because taking the mirror image of $K$ does not affect the crossing number.\par
Let $K(p,q)$ be a two-bridge knot and $[a_1,a_2,\ldots,a_n]$ be its continued fraction expansion (Figure \ref{fig:standard2b}). In other words, $p$ and $q$ are coprime and
\[\frac{p}{q}=a_1+\frac{1}{a_2+\frac{1}{\cdots+\frac{1}{a_n}}}.\]
\begin{figure}[htbp]
\begin{center}
\includegraphics[scale=0.6]{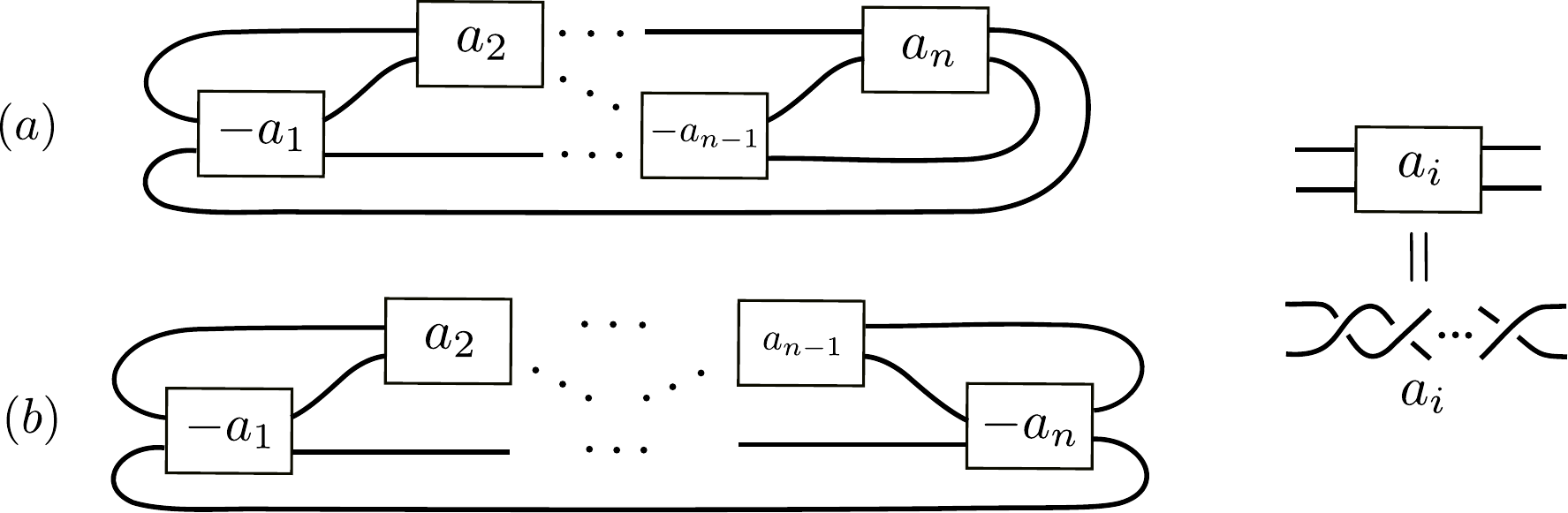}
\end{center}
\caption{Two-bridge knots. $(a)$ $n$ is even, $(b)$ $n$ is odd.\label{fig:standard2b}}
\end{figure}
Two-bridge knots are classified completely by the following theorem. See Theorem 2.1.3 in \cite{kawauchi1996} for example.
\begin{thm}
\label{thm:2b}
Two-bridge knots $K(p,q)$ and $K(p',q')$ represents the same knot if and only if $p=p'$ and $q\equiv q'^{\pm1}\mod p$.
\end{thm}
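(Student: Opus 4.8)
The plan is to pass to the two-fold branched covering space, where the classification becomes a question about lens spaces. First I would recall that the two-fold cover of $S^3$ branched along $K(p,q)$ is the lens space $L(p,q)$; this can be read off directly from the two-bridge presentation in Figure \ref{fig:standard2b}, since the decomposition of $K(p,q)$ into two trivial tangles glued along the four-punctured sphere lifts to the standard genus-one Heegaard splitting of $L(p,q)$, with gluing map determined by the slope $p/q$. With this translation in hand, the theorem splits into the two implications of the classification of lens spaces, together with a faithfulness statement relating knots and their covers.

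For the forward implication, suppose $K(p,q)$ and $K(p',q')$ are the same knot, so there is a homeomorphism of pairs $(S^3,K(p,q))\to(S^3,K(p',q'))$. Such a homeomorphism lifts to a homeomorphism of branched covers $L(p,q)\to L(p',q')$. Invoking the classification of lens spaces (Reidemeister--Franz), I obtain $p=p'$ and $q'\equiv q^{\pm1}\pmod p$. Since $\pi_1(L(p,q))\cong\mathbb{Z}/p$ and the integral homology detect only $p$, the content here is entirely in recovering $q$ up to the stated equivalence, for which I would use Reidemeister torsion (equivalently the Franz independence lemma) or the $\mathbb{Q}/\mathbb{Z}$-valued linking form of $L(p,q)$; this finer invariant is exactly what separates $L(p,q)$ from $L(p,q')$ when $q'\not\equiv q^{\pm1}$.

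The reverse implication I would establish constructively at the level of the knot, rather than through the covering space. The relation $q'\equiv q\pmod p$ corresponds to inserting full twists into the tangle that are undone in the plat closure, leaving the knot unchanged. The relation $q'\equiv q^{-1}\pmod p$ corresponds to the symmetry of the two-bridge sphere that rotates the rational tangle by a right angle and interchanges the two plat strands; concretely it reverses the continued fraction $[a_1,a_2,\ldots,a_n]$ to $[a_n,\ldots,a_2,a_1]$, which is classically known to present $K(p,q^{-1})$ (up to sign of $q^{-1}$) and hence the same knot. Composing these two moves realizes every pair $(p,q),(p,q')$ with $q'\equiv q^{\pm1}\pmod p$ by an ambient homeomorphism.

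The main obstacle is the forward direction, and more precisely the lens space classification lemma on which it rests: the abelian invariants are too coarse, so the argument genuinely depends on a torsion-type invariant to detect $q$. If instead one tried to prove the reverse direction through the covers — say by building a knot homeomorphism from a lens space homeomorphism — the obstacle would migrate to the uniqueness, up to conjugacy, of the covering involution on $L(p,q)$, which is a deep point; the constructive route above is what lets me avoid it.
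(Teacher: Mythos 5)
The paper does not actually prove this statement: it is Schubert's classification of two-bridge knots, quoted with a citation to Theorem 2.1.3 of Kawauchi's book, so there is no in-paper argument to compare against. Your sketch is the classical proof and, at the level of detail offered, it is sound and matches the standard treatment in the cited literature: the two trivial tangles lift to solid tori, identifying the double branched cover as $L(p,q)$; a homeomorphism of pairs lifts to the branched covers because the double cover of the complement is determined by the unique index-two subgroup of $H_1(S^3\setminus K)\cong\mathbb{Z}$ and the branched cover is its Fox completion; and the Reidemeister--Franz torsion (or linking-form) classification of lens spaces then forces $p=p'$ and the congruence on $q$. Your decision to prove the converse by explicit moves on the plat (killing twists absorbed by the bridge sphere, and reversing the continued fraction by an ambient rotation) rather than by descending a lens-space homeomorphism correctly sidesteps the uniqueness of the covering involution, which is the genuinely delicate point.

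One mismatch with the paper's conventions deserves attention. Since the paper declares mirror images to be the same knot (homeomorphisms of $(S^3,K)$ need not preserve orientation), the lifted homeomorphism of lens spaces may reverse orientation, and the classification then yields only $q'\equiv\pm q^{\pm1}\pmod p$; correspondingly $K(p,q)$ and $K(p,p-q)$ are mirrors and hence identified, even though $p-q$ need not be congruent to $q^{\pm1}$. This is really an imprecision in the statement as reproduced in the paper (harmless in context, since the paper later works with all four slopes $p/q$, $p/(p-q)$, $p/q'$, $p/(p-q')$), but your write-up should either restrict the equivalence to orientation-preserving homeomorphisms of $S^3$, for which your cited form of the lens-space classification is the right one, or carry the extra sign consistently through both directions.
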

It is well known that $K(p,q)$ is a knot if and only if $p$ is odd. This fact and Theorem \ref{thm:2b} imply that if $K(p,q)$ is a knot, then $q$ can be chosen as an even integer such that $0<|q|<p$. 
\begin{prop}
\label{prop:even}
For any two-bridge knot $K(p,q)$ with odd $p$, even $q$, and $0<|q|<p$, the slope $p/q$ has a unique continued fraction expansion consisting of only even integers and the number of integers is also even. In other words, $p/q$ has an expansion with non-zero integers $a_i$ as follows:
\[\frac{p}{q}=[2a_1,2a_2,\ldots,2a_{2n}]
=2a_1+\frac{1}{2a_2+\frac{1}{\cdots+\frac{1}{2a_{2n}}}}.\]
\end{prop}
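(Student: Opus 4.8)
The plan is to prove existence and uniqueness simultaneously by running a variant of the Euclidean algorithm in which every quotient is forced to be even. The central tool is the following division lemma: given integers $a,b$ of opposite parity (one odd, one even) with $\gcd(a,b)=1$, there is a unique even integer $2c$ such that $r := a - 2cb$ satisfies $|r| < |b|$. The even multiples $2cb$ are spaced $2|b|$ apart, so the window $(-|b|,|b|)$ of length $2|b|$ captures exactly one residue $a-2cb$; the boundary values $r = \pm|b|$ cannot occur because $r \equiv a \pmod 2$ has parity opposite to that of $|b|$. Moreover $r \neq 0$ unless $|b| = 1$, again by coprimality. This lemma both produces and pins down each step of the expansion.

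I would then apply this repeatedly starting from the pair $(p,q)$, obtaining successive pairs $(p,q)\to(q,r_1)\to(r_1,r_2)\to\cdots$, where each step records an even quotient $2a_i$ and passes to the next pair (old denominator, new remainder). The first key step is to track parities: since $p$ is odd and $q$ is even, an easy induction shows the pairs alternate between (odd, even) and (even, odd), so in every pair the two entries have opposite parity (justifying the hypothesis of the lemma at each step). In particular the denominator is even at odd-indexed steps and odd at even-indexed steps. Since $\gcd(p,q)=1$ is preserved and the remainders strictly decrease in absolute value, the algorithm terminates, and it does so exactly when a remainder becomes $0$; coprimality forces the corresponding denominator to be $\pm1$. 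As $\pm1$ is odd, termination can only occur at an even-indexed step, which is precisely the assertion that the number of terms is even (hence of the form $2n$). That every quotient is nonzero follows because $|p/q|>1$ at the first step (as $0<|q|<p$) and $|\mathrm{numerator}|>|\mathrm{denominator}|$ at each later step, so the nearest even integer is never $0$; likewise the final quotient $\pm(\mathrm{even\ numerator})$ is a nonzero even integer.

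For uniqueness I would show that any expansion $p/q=[2a_1,\dots,2a_{2n}]$ into nonzero even integers must coincide with the algorithm's output. The needed fact is that every tail $[2a_k,\dots,2a_{2n}]$ has absolute value strictly greater than $1$, proved by a short downward induction using $|2a_k|\ge 2$ together with $|1/[2a_{k+1},\dots,2a_{2n}]|<1$. Granting this, $p/q-2a_1 = 1/[2a_2,\dots,2a_{2n}]$ has absolute value $<1$, so $2a_1$ is forced to be the unique even integer within distance $1$ of $p/q$, namely the one chosen by the lemma; induction on the length then determines all remaining terms, giving both uniqueness and the fact that no other even expansion exists.

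I expect the main obstacle to be the parity bookkeeping that yields the evenness of the number of terms: one must check carefully that the remainder always lands strictly inside $(-|b|,|b|)$ and is nonzero at the odd-indexed steps, and that the termination condition meshes with the alternating parity pattern so that the process can only halt after an even number of divisions. By contrast, the tail bound used for uniqueness is routine.
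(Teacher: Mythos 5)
Your proof is correct. The paper does not actually prove this proposition---it is stated as a well-known fact and illustrated only by a worked example (``if the quotient is odd, add one to the quotient and subtract one from the remainder'')---but the division lemma you isolate (the unique even quotient $2c$ with remainder $a-2cb$ in $(-|b|,|b|)$, with the boundary excluded by the opposite-parity hypothesis) is exactly a formalization of that procedure, and your parity bookkeeping for the even length and the tail bound for uniqueness correctly supply the details the paper omits.
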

\begin{ex}
{\rm
For $K=6_3=K(13,5)$, we can replace the slope $13/5$ with $13/8$ using Theorem \ref{thm:2b}. We may assume that a rational number for each step of the expansion is positive by putting aside the negative sign if necessary. Divide the numerator by the denominator with a positive remainder. If the quotient is odd, add one to the quotient and subtract one from the remainder. By repeating this procedure, $13/8$ is expanded consisting of only even integers as follows:
\[\frac{13}{8}=2-\frac{3}{8}=2+\frac{1}{-2-\frac{2}{3}}=2+\frac{1}{-2+\frac{1}{-2+\frac{1}{2}}}=[2,-2,-2,2].\]
}
\end{ex}
\begin{dfn}
{\rm 
For a two-bridge knot $K(p,q)$, the continued fraction expansion in Proposition $\ref{prop:even}$ is called an {\it even expansion} of $p/q$.
}
\end{dfn}
\section{Lamm's conjecture}
\label{sec:lamm}
Two kinds of diagrams realize strong invertibility. Boyle uses the terms {\it intravergent} and {\it transvergent} \cite{boyle}. The diagram with an axis perpendicular to the plane of the diagram is said to be intravergent, and the diagram with an axis within the plane of the diagram is said to be transvergent (Figure \ref{fig:intra_trans}). The difference of isotopy is allowed in \cite{boyle}, but we define a {\it symmetric diagram} as follows.
\begin{figure}[htbp]
\begin{center}
\includegraphics[scale=0.6]{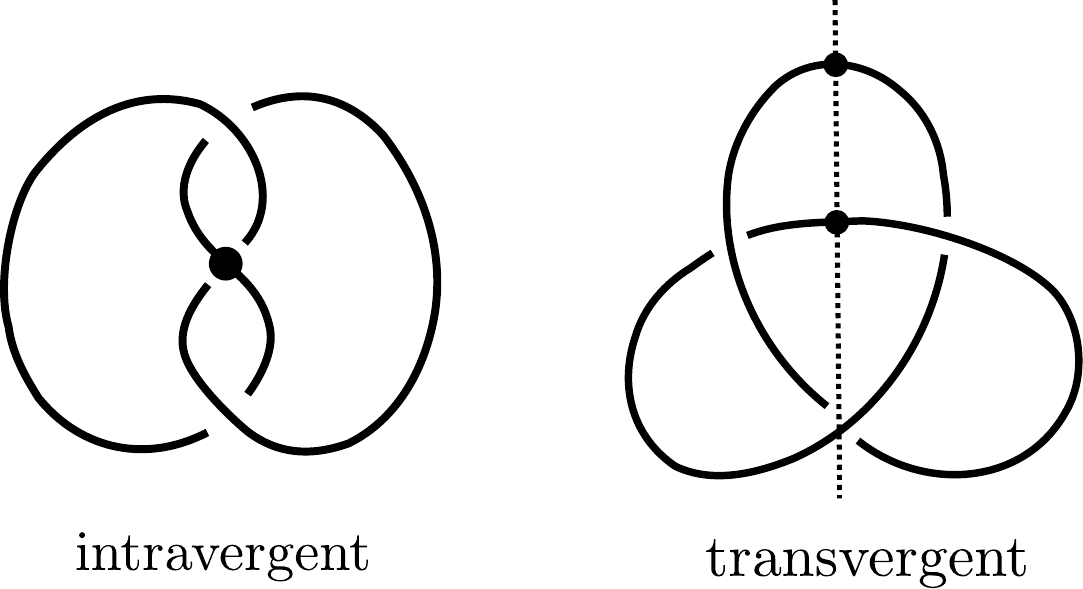}
\end{center}
\caption{The intravergent diagram and the transvergent diagram for the trefoil knot.\label{fig:intra_trans}}
\end{figure}
\begin{dfn}
{\rm 
A {\it symmetric diagram} for a strongly invertible knot $K$ is defined as its transvergent diagram, a diagram whose axis is a straight line on the plane of the diagram, and the diagram is unchanged under the half-rotation around the axis.
}
\end{dfn}
\begin{dfn}
{\rm 
For a strongly invertible knot $K$, the {\it equivariant crossing number} $c_t(K)$ is defined by the minimum crossing number among all symmetric diagrams for $K$.
}
\end{dfn}
Obviously $c(K)\leq c_t(K)$, and we are interested in the difference. To obtain $c_t(K)$ for alternating knots, we can restrict diagrams to investigate.
\begin{prop}
Let $K$ be an alternating prime strongly invertible knot and $D$ a diagram which realizes $c_t(K)$. Then $c_t(K)=c(K)$ if and only if $D$ is alternating.
\end{prop}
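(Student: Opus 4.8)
The plan is to prove the two implications separately, using the Kauffman--Murasugi--Thistlethwaite theorem as the main tool. Write $n=|D|$ for the number of crossings of $D$; since $D$ realizes $c_t(K)$ we have $n=c_t(K)$, and since $K$ is a knot the diagram $D$ is connected. I will use two standard facts about the Jones polynomial $V_K$: first, for any connected $n$-crossing diagram one has $\mathrm{span}\,V_K\le n$, with equality if and only if the diagram is reduced and alternating; second, because $K$ is alternating, $\mathrm{span}\,V_K=c(K)$, realized by any reduced alternating diagram.

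For the implication \emph{$D$ alternating $\Rightarrow c_t(K)=c(K)$}, I would first show that the minimal symmetric diagram $D$ is reduced. If $D$ contained a nugatory crossing, then the half-rotation about the transvergent axis would either fix that crossing or carry it to a second nugatory crossing, and in either case the untwisting can be performed equivariantly, yielding a symmetric diagram with strictly fewer crossings and contradicting the minimality that defines $c_t(K)$. Thus $D$ is a reduced alternating diagram, so $n=c(K)$ by the Kauffman--Murasugi--Thistlethwaite theorem, and hence $c_t(K)=n=c(K)$.

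For the converse \emph{$c_t(K)=c(K)\Rightarrow D$ alternating}, the hypothesis gives $n=c_t(K)=c(K)$, so $D$ is a minimal-crossing diagram of $K$. Then $\mathrm{span}\,V_K=c(K)=n$, so $D$ attains equality in $\mathrm{span}\,V_K\le n$, and the equality case forces $D$ to be reduced and alternating. In particular $D$ is alternating, as required.

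The step I expect to be the main obstacle is this converse: passing from \emph{$D$ is minimal} to \emph{$D$ is alternating}. This is precisely the sharp equality form of the span estimate, which I would justify through the adequacy analysis of the Kauffman bracket together with the inequality $|s_A D|+|s_B D|\le n+2$ for a connected diagram (with equality exactly for alternating diagrams), where $s_A D$ and $s_B D$ denote the all-$A$ and all-$B$ states; merely invoking Tait's first conjecture, that reduced alternating diagrams are minimal, does not suffice in this direction. A secondary technical point, in the first implication, is to check that the removal of a nugatory crossing can be made to respect the transvergent axis, for which it is enough to choose the separating circle of the crossing symmetrically. Primeness of $K$ plays no essential role in this particular line of argument and enters only through the hypotheses inherited from Lamm's conjecture.
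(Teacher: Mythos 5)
Your argument follows essentially the same route as the paper's: the direction ``minimal $\Rightarrow$ alternating'' is exactly the Kauffman--Murasugi--Thistlethwaite result that the paper invokes as Corollary 5.10 of Lickorish, and the direction ``alternating $\Rightarrow$ minimal'' is handled, as in the paper, by untwisting a nugatory crossing equivariantly and then applying Tait's first conjecture (the paper phrases this contrapositively, you phrase it directly; the content is identical). One genuine correction, however: your closing remark that primeness of $K$ plays no essential role is wrong, and your auxiliary claim that $|s_A D|+|s_B D|\le n+2$ holds ``with equality exactly for alternating diagrams'' is false without a primeness assumption on the diagram. Take reduced alternating diagrams $D_1,D_2$ of alternating knots and form a connected sum $D=D_1\# D_2$ with the band attached so that the over/under pattern fails to match at the junction: then $|s_A D|=|s_A D_1|+|s_A D_2|-1$ and likewise for $s_B$, so $|s_A D|+|s_B D|=n+2$, yet $D$ is non-alternating and has $n=c(K_1)+c(K_2)=c(K_1\# K_2)$ crossings. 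Thus a composite alternating knot can have a non-alternating minimal diagram, and the converse implication you are proving genuinely requires $K$ prime (primeness of the knot is what lets one reduce to prime diagrams in the standard adequacy argument). This is precisely why the hypothesis appears in the statement; the forward direction, by contrast, does not use it.
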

\begin{proof}
From Corollary $5.10$ in \cite{lickorish}, the number of crossings of a non-alternating diagram for an alternating knot is not minimum. Hence if $c_t(K)=c(K)$, then $D$ is alternating. Suppose $c_t(K)>c(K)$. If $D$ is alternating, $D$ is not reduced and the crossing in the non-reduced part can be removed preserving symmetry. Then the number of crossings of $D$ is less than $c_t(K)$. This contradicts the definition of $c_t(K)$. Therefore $D$ is not alternating.
\end{proof}
From this proposition, it is enough to consider alternating diagrams when we ensure $c_t(K)=c(K)$ for alternating knots. A primitive way is to allocate crossings to three regions, left, center (axis), and right, and investigate all patterns of the connections. However, this method is not effective for large crossing numbers. As far as the author knows,  $c_t(K)$ of the following knots are equal to $c(K)$:
\[6_1,6_2,7_1,7_2,7_3,7_4,7_5,7_6,7_7,8_1,8_2,8_3,8_4,8_5,8_6,8_{11},8_{12},8_{15},8_{16},8_{18},8_{19},8_{20},8_{21}.\]
In addition, $c_t(K)\leq c(K)+1$ holds for all knots $K$ with $c(K)\leq8$. These facts are based on the investigation of $c_2(K)$ in Section \ref{sec:results}, and Lamm's research \cite{lamm2} on three-bridge knots.\par
Lamm conjectured that for two-bridge knots, $c_t(K)=c(K)$ if and only if $K$ is one of the two types of diagrams.
\begin{conj}[Lamm \cite{lamm2}]
\label{conj:lamm}
Let $K$ be a two-bridge knot. Then $c_t(K)=c(K)$ if and only if $K$ can be represented by a continued fraction expansion $[a_1,\ldots,a_n]$ which is one of the following types (Figure \ref{fig:lamm}).\\
\quad$(A)$ All $a_i$ are positive, $n$ is even and $a_2,a_4,\ldots,a_n$ are all even.\\
\quad$(B)$ All $a_i$ are positive, $n$ is odd, integers are palindromic  such that $a_1=a_n,a_2=a_{n-1},\ldots$ and the central integer $a_{(n+1)/2}$ is odd.
\begin{figure}[htbp]
\begin{center}
\includegraphics[scale=0.5]{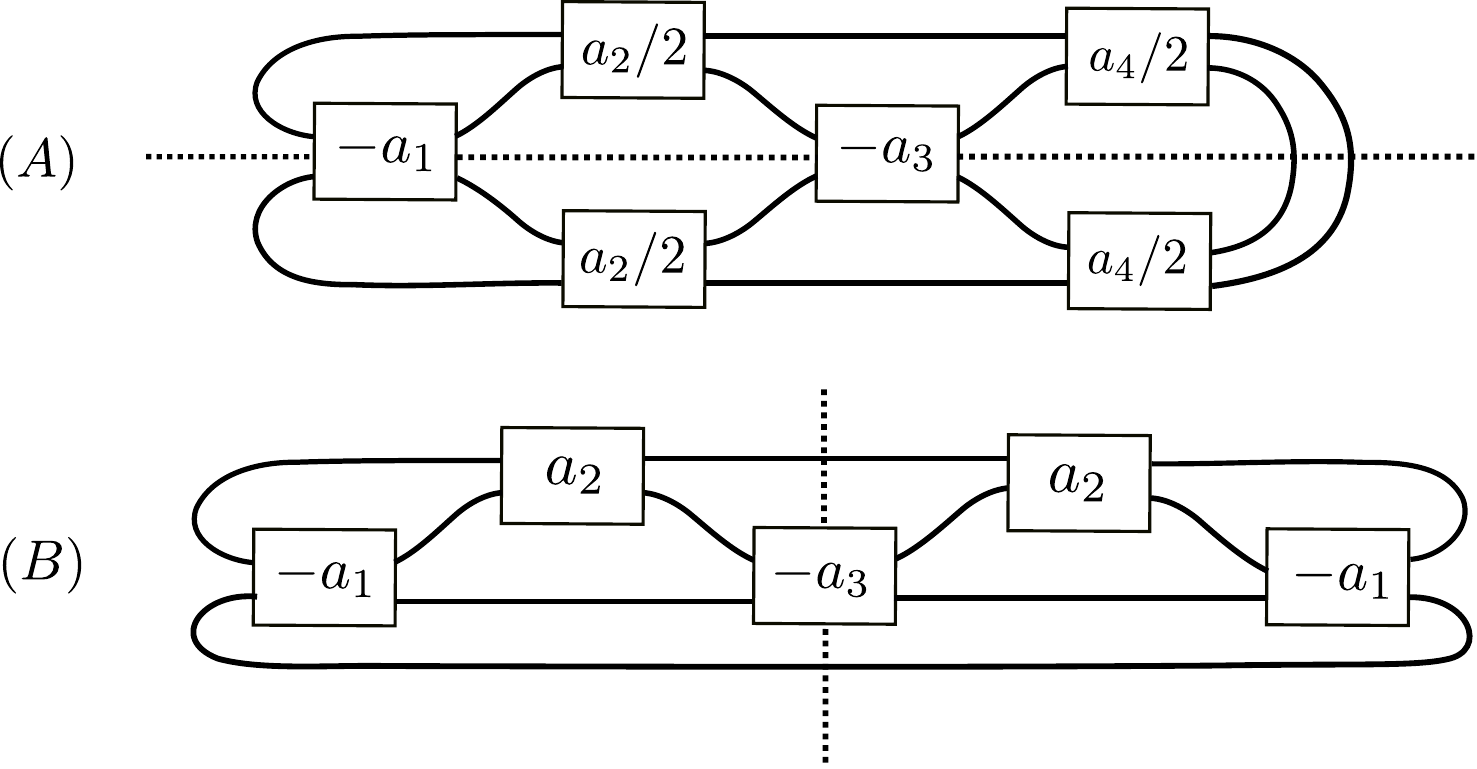}
\end{center}
\caption{Two types of Conjecture $\ref{conj:lamm}$.\label{fig:lamm}}
\end{figure}
\end{conj}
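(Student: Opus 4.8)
The plan is to treat the two implications separately, since the forward direction (types $(A)$ and $(B)$ imply $c_t(K)=c(K)$) is within reach, whereas the converse is where the real difficulty---and the reason this remains a conjecture---lies.

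For the forward direction I would begin with an expansion $[a_1,\ldots,a_n]$ of one of the two prescribed forms and draw the associated standard rational diagram $D$. Because every $a_i$ is positive, $D$ is a reduced alternating diagram, so by the Tait conjectures (the Kauffman--Murasugi--Thistlethwaite theorem, or Corollary $5.10$ of \cite{lickorish} invoked in the preceding proposition) its number of crossings $a_1+\cdots+a_n$ equals $c(K)$. It then remains to verify that $D$ is transvergent, i.e. invariant under the half-rotation about a horizontal axis in the plane. For type $(A)$ the evenness of the entries in even position is precisely what makes each of the corresponding twist regions symmetric across the axis, while the even length $n$ places the two fixed points correctly; for type $(B)$ the palindromic relations $a_i=a_{n+1-i}$ provide the half-rotation and the odd central entry puts the two fixed points on the axis. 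Both symmetry statements are the content of Proposition \ref{prop:sakuma}, so the forward direction follows by combining that proposition with Tait minimality: $c(K)\le c_t(K)\le a_1+\cdots+a_n=c(K)$, forcing equality.

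For the converse I would assume $c_t(K)=c(K)$ and choose a symmetric diagram $D'$ realizing $c_t(K)$. By the proposition established just before the conjecture, $D'$ is alternating, and a minimal alternating diagram is reduced, so $D'$ is a reduced alternating diagram of the two-bridge knot $K$. By the flyping theorem any two reduced alternating diagrams of $K$ are related by a sequence of flypes, so $D'$ differs from the standard rational diagram only by flypes. The goal is then to read a continued fraction off $D'$ and to show that its transvergent involution forces that expansion into form $(A)$ or $(B)$; since the standard diagram is transvergent exactly for the expansions described by Proposition \ref{prop:sakuma}, the whole problem is to follow the involution through the flypes and check that the parity and palindrome conditions survive.

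The step I expect to be the main obstacle is exactly this last one: controlling how the half-rotation symmetry interacts with flypes. A flype can move a tangle in a way that disguises the symmetric shape of the continued fraction, and there is no a priori reason for a symmetry visible in $D'$ to descend to the standard diagram in the literal form demanded by $(A)$ or $(B)$. Put contrapositively, the converse amounts to a lower bound $c_t(K)>c(K)$ for every $K$ admitting no type $(A)$ or $(B)$ expansion, and lower bounds on the equivariant crossing number are hard precisely because they must exclude all symmetric $c(K)$-crossing diagrams simultaneously, with no single numerical invariant known to obstruct them. This is why one retreats in the body of the paper to the computable quantity $c_2(K)$, which is defined by restricting to diagrams of types $(A)$ and $(B)$ and thereby furnishes an upper bound for $c_t(K)$ without confronting the converse.
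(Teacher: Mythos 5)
The statement you were asked about is not a theorem of the paper: it is Lamm's conjecture, which the paper explicitly records as unproved (``Lamm's conjecture has not been proved yet''), and the paper offers no proof to compare against. Your proposal correctly diagnoses the situation. The forward direction you sketch is essentially the standard argument: a type $(A)$ or $(B)$ expansion with all entries positive yields a reduced alternating diagram, so by the Kauffman--Murasugi--Thistlethwaite theorem its crossing number is $c(K)$, and the diagrams of Figure \ref{fig:lamm} are transvergent by construction, giving $c(K)\le c_t(K)\le\sum a_i=c(K)$. (One small caveat: the symmetry of those diagrams is really Lamm's explicit construction as depicted in Figure \ref{fig:lamm}; Proposition \ref{prop:sakuma} classifies the strong inversions in terms of the \emph{even} expansion and motivates the conjecture rather than literally supplying the transvergent diagrams for a positive expansion of type $(A)$ or $(B)$. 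One should also check that the type $(A)$/$(B)$ expansion in question realizes $\sum a_i=c(K)$, i.e. that the associated alternating diagram is reduced.)

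The genuine gap is exactly the one you name: the converse. Your flyping-theorem strategy --- take a minimal symmetric diagram, note it is reduced alternating by the proposition preceding the conjecture, and try to push the involution through flypes back to a standard rational form --- is a reasonable line of attack, but the step of showing that the symmetry survives the flypes in the literal shape demanded by $(A)$ or $(B)$ is precisely what nobody has carried out; equivalently, no lower-bound technique is known that certifies $c_t(K)>c(K)$ for the excluded knots. This is not a flaw you could have repaired: it is the open problem itself, and it is why the paper retreats to the computable upper bound $c_2(K)$ obtained by restricting to diagrams of types $(A)$ and $(B)$. Your proposal should therefore be read as a correct partial argument plus an accurate description of the obstruction, not as a proof.
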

This conjecture comes from the following proposition by Sakuma (Proposition 3.6 in \cite{sakuma}). 
\begin{prop}[Sakuma \cite{sakuma}]
\label{prop:sakuma}
Let $K=K(p,q)$ be a two-bridge knot and $[a_1,\ldots,a_n]$ its even expansion with even $n$. Then $K$ has strong inversions as follows (Figure \ref{fig:sakumaprop}).\\
\quad$(1)$ If $q^2\not\equiv1\mod p$, then
\[I_1(a_1,a_3,\ldots,a_{n-1};a_2/2,a_4/2,\ldots,a_n/2),\]
\[I_1(-a_n,-a_{n-2},\ldots,-a_2;-a_{n-1}/2,-a_{n-3}/2,\ldots,-a_1/2).\]
\quad$(2)$ If $q^2\equiv1\mod p$, then
\[I_1(a_1,a_3,\ldots,a_{n-1};a_2/2,a_4/2,\ldots,a_n/2),\quad I_2(a_1,a_2,\ldots,a_{n/2}).\]
\begin{figure}[htbp]
\begin{center}
\includegraphics[scale=0.5]{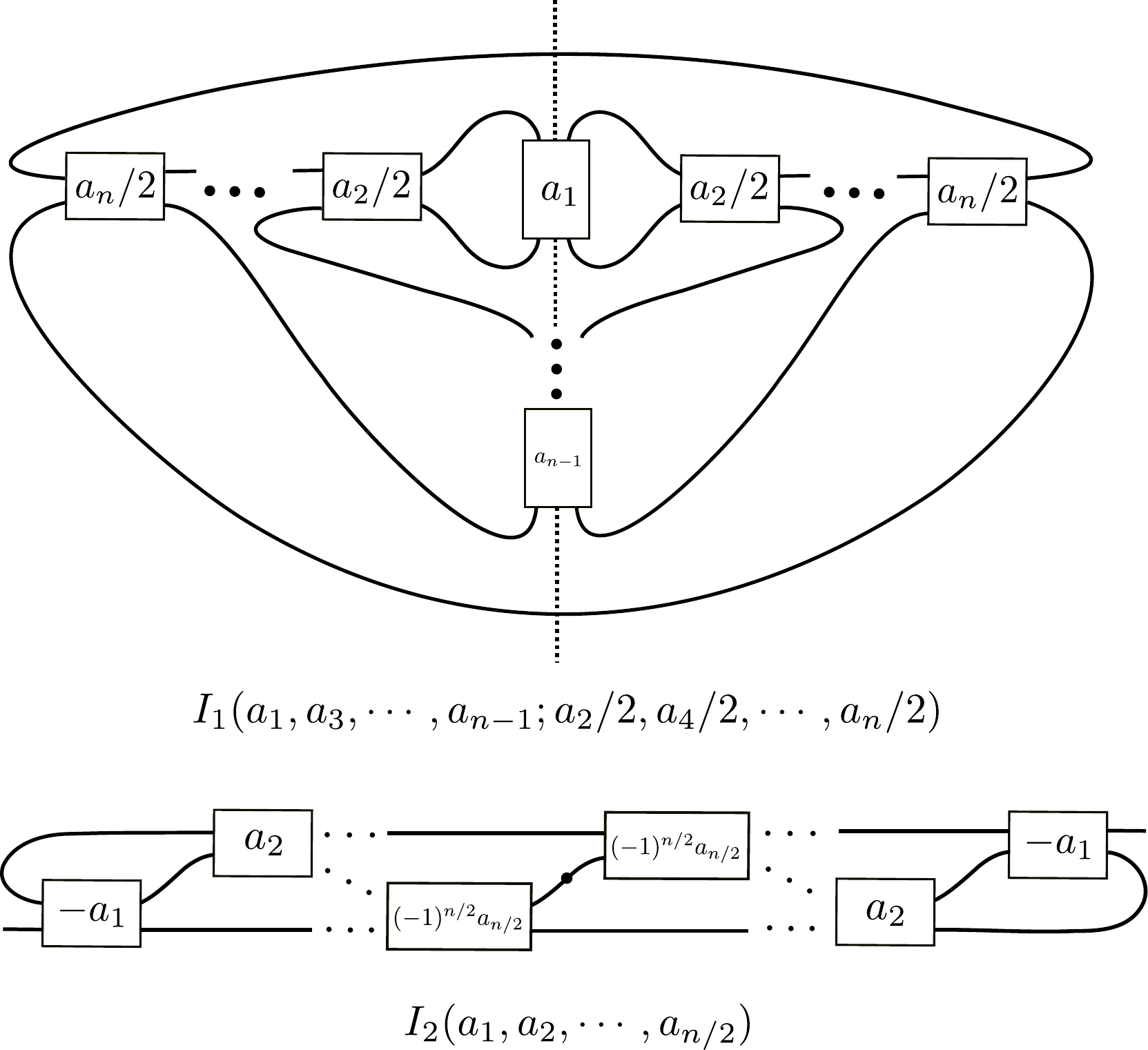}
\end{center}
\caption{Two strong inversions of two-bridge knots.\label{fig:sakumaprop}}
\end{figure}
\end{prop}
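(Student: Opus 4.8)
The plan is to realize each claimed inversion as a half-rotation of an explicit symmetric diagram read off from the even expansion, and then to invoke the classification of symmetries of two-bridge knots to see that nothing else occurs. First I would draw the standard $4$-plat diagram determined by $[a_1,\ldots,a_n]$, in which the odd-indexed entries $a_1,a_3,\ldots,a_{n-1}$ control one family of twist regions and the even-indexed entries $a_2,a_4,\ldots,a_n$ the transverse family. The reason for insisting on the even expansion (Proposition \ref{prop:even}) is exactly that every $a_i$ is even: this lets me place a horizontal axis so that the half-rotation matches over- and under-crossings region by region. The twist regions lying along the axis are then preserved crossing-by-crossing and contribute their full counts $a_1,a_3,\ldots,a_{n-1}$, while the transverse regions are interchanged in mirror pairs and contribute only half of their counts, $a_2/2,a_4/2,\ldots,a_n/2$. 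Reading these contributions off the symmetric picture gives precisely $I_1(a_1,a_3,\ldots,a_{n-1};a_2/2,\ldots,a_n/2)$, and I would check that the fixed-point set of this rotation is an unknotted circle meeting $K$ in two points, so that it is a genuine strong inversion.

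For the second inversion I would exploit the non-uniqueness of the slope in Theorem \ref{thm:2b}. The reversed, negated even expansion $[-a_n,-a_{n-1},\ldots,-a_1]$ represents the same knot — reversal replaces $q$ by a representative of $-q^{-1}\bmod p$, and negation is a mirror image, both of which preserve the knot type up to the mirror convention used throughout — so feeding it into the same $I_1$ recipe produces the second inversion $I_1(-a_n,-a_{n-2},\ldots,-a_2;-a_{n-1}/2,\ldots,-a_1/2)$ of case $(1)$. In case $(2)$ the hypothesis $q^2\equiv1\bmod p$ forces $q\equiv q^{-1}$, and a short computation shows that $[-a_n,\ldots,-a_1]$ then represents the slope $p/q$ itself; by the uniqueness in Proposition \ref{prop:even} the two expansions coincide, giving the anti-palindromic relation $a_{n+1-i}=-a_i$. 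The two $I_1$ inversions therefore collapse to one, but this central symmetry instead exposes an axis through the middle of the diagram, and the resulting strong inversion is determined by the first half of the parameters, namely $I_2(a_1,a_2,\ldots,a_{n/2})$.

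To see that these are all the strong inversions, I would pass to the double branched cover, which for $K(p,q)$ is the lens space $L(p,q)$: a strong inversion lifts to an involution of $L(p,q)$ commuting with the covering involution, and the conjugacy classes of such involutions are controlled by the isometries of $L(p,q)$. This bounds the number of strong inversions by two, so the two that have been constructed — which one checks directly to be inequivalent — exhaust them; the dichotomy between a second inversion of type $I_1$ and one of type $I_2$ then corresponds exactly to whether $q^2\equiv1\bmod p$.

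I expect the crux to be the first step: verifying that the half-rotation is a symmetry of the diagram itself and not merely of its underlying projection, and that the even parity of every $a_i$ is exactly what makes the over/under data consistent under the rotation, so that the twist counts split as $a_{\mathrm{odd}}$ along the axis and $a_{\mathrm{even}}/2$ off it rather than in any other combination. Once this bookkeeping is pinned down, the second inversion follows formally from Theorem \ref{thm:2b} and Proposition \ref{prop:even}, and the completeness step is routine modulo the classification of lens-space involutions; the remaining delicate point there is to translate the abstract count of involutions back into the inequivalence of the two explicitly listed inversions.
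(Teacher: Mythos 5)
The paper itself offers no proof of this statement: it is quoted directly from Sakuma \cite{sakuma} (Proposition 3.6 there), so there is no internal argument to compare your proposal against. Judged on its own terms, your sketch follows the strategy one would expect and that Sakuma essentially uses: build the two inversions explicitly as half-rotations of the $4$-plat diagram determined by the even expansion (with the evenness of the $a_i$ making the over/under data rotation-invariant, so the axial twist regions contribute $a_{\mathrm{odd}}$ and the transverse ones $a_{\mathrm{even}}/2$), derive the second inversion from the reversed, negated expansion via Theorem \ref{thm:2b}, and observe that $q^2\equiv1\bmod p$ forces the anti-palindromic relation $a_{n+1-i}=-a_i$ by the uniqueness in Proposition \ref{prop:even}, which collapses the two $I_1$'s and exposes the central $I_2$ axis instead. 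The arithmetic in your second paragraph is right (for $n$ even, reversal sends $q$ to $-q^{-1}$ and negation then gives $q^{-1}$, so the reversed-negated expansion represents the same knot). The one place where you understate the difficulty is the completeness step: ``the conjugacy classes of such involutions are controlled by the isometries of $L(p,q)$'' and ``routine'' conceal the real content, namely the classification of involutions of lens spaces commuting with the covering involution (Hodgson--Rubinstein/Bonahon--Siebenmann, or the orbifold theorem), without which one cannot conclude that the two listed inversions exhaust all strong inversions up to equivalence. That input is exactly what Sakuma's original paper supplies, and any self-contained write-up would need to cite or reprove it; as a blind reconstruction of a result the present paper only quotes, your outline is sound.
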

To evaluate $c_t(K)$ for two-bridge knots, we restrict diagrams to two types in Figure \ref{fig:lamm}.
\begin{dfn}
{\rm 
For a two-bridge knot $K$, the {\it two-bridge equivariant crossing number} $c_2(K)$ is defined by the minimum crossing number among all diagrams for $K$ of the type $(A)$ or $(B)$ in Figure \ref{fig:lamm}.
}
\end{dfn}
We focus on $c_2(K)$ of two-bridge knots from now on. The definition implies $c(K)\leq c_t(K)\leq c_2(K)$. This means $c_2(K)$ is an upper bound for $c_t(K)$. Investigating the continued fraction expansion of the slope is effective to determine $c_2(K)$.
\section{Semi-even expansion}
\label{sec:semieven}
In this section, the expansion corresponding to $(A)$ in Figure \ref{fig:lamm} is defined. In Conjecture \ref{conj:lamm}, the condition that all $a_i$ are positive corresponds to $c_2(K)=c(K)$. More generally, even in the case that some integers in the expansion are negative (i.e. $c_2(K)>c(K)$), we only need to consider two types of diagrams in Figure \ref{fig:lamm}. We define Type $A$ and Type $B$ as follows.\\
\quad(Type $A$) $n$ is even and $a_2,a_4,\ldots,a_n$ are all even.\\
\quad(Type $B$) $n$ is odd, integers are palindromic  such that $a_1=a_n,\ a_2=a_{n-1},\ldots$ and the central integer $a_{(n+1)/2}$ is odd.
\begin{dfn}
{\rm 
The {\it crossing number} of a continued fraction expansion $[a_1,\ldots,a_n]$ is defined by the value $\sum_{i=1}^n|a_i|$.
}
\end{dfn}
A simple way to know whether $c_2(K)=c(K)$ or not is to do the positive expansion of the slope $p/q$ $(p,q>0)$ and check that whether it is Type $A$ or $B$. Note that such expansion is unique. However, the differences of slopes in Theorem \ref{thm:2b} are allowed. A two-bridge knot $K(p,q)$($p$ is odd, $q$ is even, $0<q<p$, $\gcd(p,q)=1$) has four slopes such that $0<$ (denominator) $<$ (numerator):
\[\frac{p}{q},\quad\frac{p}{p-q},\quad\frac{p}{q'},\quad\frac{p}{p-q'}\quad(qq'\equiv1\mod p).\]
Slopes with even denominators are used to determine $c_2(K)$. On the other hand, if we know $c_2(K)\geq c(K)+1$, the following steps determine $c_2(K)$.\\
\\
\quad {\it Step 1.}\quad List all expansions of Type $A$ or $B$ with positive integers whose sum is $c(K)+1$.\\
\quad{\it Step 2.}\quad Add some negative signs for each expansion and calculate the continued fraction for all patterns. If one of the four slopes of $K$ appears, then $c_2(K)=c(K)+1$. Otherwise, go on to Step\ $3$.\\
\quad{\it Step 3.}\quad Add one more crossing and repeat from Step\ $1$.\\
\\
Since this procedure is complicated for large crossing numbers, we introduce another way of expansion. Type $A$ requires even integers in only even positions. Hence we can ignore the parities of integers in odd positions on Proposition $\ref{prop:even}$.
\begin{dfn}
{\rm 
Let $p/q$ be a rational number ($p$ is odd, $q$ is even, $0<q<p$ and $\gcd(p,q)=1$). A {\it semi-even expansion} is a continued fraction expansion consisting of even integers in all even positions.
}
\end{dfn}
\begin{prop}
For a semi-even expansion $p/q=[a_1,\ldots,a_n]$, $n$ is even if and only if $q$ is even.
\end{prop}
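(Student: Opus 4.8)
The plan is to encode the continued fraction by the standard transfer matrices and then reduce everything modulo $2$. Recall that if $M_i=\begin{pmatrix}a_i&1\\1&0\end{pmatrix}$, then $M_1M_2\cdots M_n=\begin{pmatrix}p&p'\\q&q'\end{pmatrix}$, whose first column $(p,q)^{\top}$ records the numerator and denominator of $p/q=[a_1,\ldots,a_n]$; this fraction is automatically reduced, since each $\det M_i=-1$ forces $pq'-p'q=\pm1$ and hence $\gcd(p,q)=1$. Consequently the parity of $q$ is exactly the parity of the lower-left entry of this product, so the whole statement can be read off after reducing mod $2$.

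Next I would reduce each factor modulo $2$: an even entry gives $S=\begin{pmatrix}0&1\\1&0\end{pmatrix}$ and an odd entry gives $T=\begin{pmatrix}1&1\\1&0\end{pmatrix}$. By the defining property of a semi-even expansion, every even-indexed factor $M_2,M_4,\dots$ reduces to $S$, while each odd-indexed factor is $S$ or $T$ according to the parity of $a_1,a_3,\dots$. I would then group the product as $(M_1S)(M_3S)\cdots$, peeling off the forced $S$ that follows each odd-indexed matrix. A direct computation gives $S\cdot S=I$ and $T\cdot S=U$, where $U=\begin{pmatrix}1&1\\0&1\end{pmatrix}$ satisfies $U^2=I\pmod 2$; so each grouped factor is either $I$ or $U$, and the product of all grouped factors is $U^{t}$, where $t$ is the number of odd entries in odd positions.

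The conclusion then splits according to the parity of $n$. If $n$ is even, the entire product reduces to $U^{t}$, whose first column is $(1,0)^{\top}$ regardless of $t$, so $q$ is even. If $n$ is odd, the product reduces to $U^{t}M_n$ with $M_n\in\{S,T\}$; checking the four cases $U^{t}S$ and $U^{t}T$ for $t$ even or odd, the first column is always $(0,1)^{\top}$ or $(1,1)^{\top}$, so $q$ is odd in every case. Combining the two implications yields that $n$ is even if and only if $q$ is even. (Note that this argument does not even use that $p$ is odd; the parity of $q$ alone is governed by the parity of $n$.)

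I expect the only delicate point to be the bookkeeping that matches the even-position constraint to the correct factors in the mod-$2$ product; once the pairing $(M_{2i-1}S)$ is set up and the identities $S^2=I$ and $U^2=I$ are in hand, the parity of $t$ turns out to be irrelevant and the case analysis is immediate. Equivalently, one could avoid matrices and run the backward recursion $p_k=a_kp_{k+1}+q_{k+1}$, $q_k=p_{k+1}$ from $k=n$ down to $k=1$, tracking $(p_k,q_k)\bmod 2$; this is the same computation written without matrices and would make a reader-friendly alternative if a matrix-free exposition is preferred.
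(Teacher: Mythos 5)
Your proof is correct. At bottom it is the same parity computation as the paper's: both arguments reduce the continued-fraction recursion modulo $2$ and propagate parities along the expansion. The packaging differs, though. The paper works from the innermost term outward, tracking whether the tail $[a_k,\ldots,a_n]$ has the form \emph{odd}/\emph{even} or \emph{even}/\emph{odd} and arguing that these forms alternate; you instead take the left-to-right transfer-matrix product mod $2$, pair each odd-position factor with the forced $S$ that follows it, and use $S^2=I$, $TS=U$, $U^2=I$ to collapse the whole product to $U^{t}$ (plus one leftover factor when $n$ is odd). Your version buys cleaner bookkeeping --- the alternating case analysis becomes a single commuting-matrix identity, the parity of $t$ is visibly irrelevant, and you get for free that $p$ is odd whenever $n$ is even --- at the cost of invoking the standard matrix identity for convergents. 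Two small points to flag in a write-up: the identity $M_1\cdots M_n=\bigl(\begin{smallmatrix}p&p'\\q&q'\end{smallmatrix}\bigr)$ holds verbatim for negative entries, which matters here since semi-even expansions generally have mixed signs; and the first column is $\pm(p,q)^{\top}$ rather than $(p,q)^{\top}$ on the nose once signs enter, which is harmless modulo $2$ but worth a sentence.
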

\begin{proof}
The expansion is as follows:
\[\frac{p}{q}=a_1+\frac{1}{\cdots+\frac{1}{a_{n-2}+\frac{1}{a_{n-1}+\frac{1}{a_n}}}}.\]
If $n$ is even, then $a_{n-1}+1/a_n$ is the form of $odd/even$. Taking the inverse and adding $a_{n-2}$ change the form into $even/odd$. This repeats alternatively and ends with $odd/even$. This implies $q$ is even. If $n$ is odd, then $a_{n-1}+1/a_n$ is the form of $odd/*$, where $*$ means either odd or even. Taking the inverse and adding $a_{n-2}$ change the form into $*/odd$. This repeats alternatively and ends with $*/odd$. This implies $q$ is odd.
\end{proof}
Since semi-even expansion generates a symmetric diagram of Type $A$, $c_2(K)$ is less than or equal to the crossing number of the expansion. Hence above three steps finish with finite times of repetition.
\begin{ex}
{\rm
Let $K$ be $6_3=K(13,8)$. Since $8\cdot5\equiv1\mod13$, four slopes of $K$ are
\[\frac{13}{8},\quad\frac{13}{5},\quad\frac{13}{5},\quad\frac{13}{8}.\]
The positive expansions are
\[\frac{13}{8}=1+\frac{5}{8}=1+\frac{1}{1+\frac{3}{5}}=1+\frac{1}{1+\frac{1}{1+\frac{2}{3}}}=1+\frac{1}{1+\frac{1}{1+\frac{1}{1+\frac{1}{2}}}}=[1,1,1,1,2],\]
\[\frac{13}{5}=2+\frac{3}{5}=2+\frac{1}{1+\frac{2}{3}}=2+\frac{1}{1+\frac{1}{1+\frac{1}{2}}}=[2,1,1,2].\]
Since these are neither Type $A$ nor $B$, we have $c_2(6_3)>6$. The slope $13/8$ is used for even and semi-even expansions. The even expansion is
\[\frac{13}{8}=2-\frac{3}{8}=2+\frac{1}{-2-\frac{2}{3}}=2+\frac{1}{-2+\frac{1}{-2+\frac{1}{2}}}=[2,-2,-2,2].\]
The crossing number is $8$. The semi-even expansion is
\[\frac{13}{8}=1+\frac{5}{8}=1+\frac{1}{2-\frac{2}{5}}=1+\frac{1}{2+\frac{1}{-2+\frac{1}{-2}}}=[1,2,-2,-2].\]
The crossing number is $7$ and this implies $c_2(6_3)\leq7$. Therefore $c_2(6_3)=7$.
}
\end{ex}
A semi-even expansion tends to have smaller crossing numbers than that of an even expansion. This can be proved as follows.
\begin{prop}
\label{prop:even_semieven}
For every slope $p/q$ ($p$ is odd, $q$ is even, $0<q<p$, $\gcd(p,q)=1$), the crossing number of the semi-even expansion is less than or equal to that of the even expansion.
\end{prop}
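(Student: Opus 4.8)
My plan is to recast both expansions as greedy division algorithms and then compare them by induction. After putting each intermediate fraction in the form $x/y$ with $x,y>0$ and $\gcd(x,y)=1$, one step of either expansion takes the floor quotient $\lfloor x/y\rfloor$ and, when that quotient is required to be even, increases it by $1$ (replacing the positive remainder by a negative one). The even expansion enforces evenness at \emph{every} step, whereas, by the parity bookkeeping already used in the preceding proposition, the semi-even expansion only needs to enforce it when the current denominator $y$ is odd; when $y$ is even the (odd-position) quotient is kept as the plain floor. Consequently the two algorithms perform \emph{identical} steps except at an even-denominator position whose floor quotient is odd. Writing $E(x/y)$ and $S(x/y)$ for the two crossing numbers, I would induct on the numerator $x$, which decreases strictly at every step since the fractions stay proper.

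In the non-divergent cases the induction is immediate. At an odd-denominator position both algorithms use the same even quotient $c$ and pass to the same fraction, so $S=c+S(\cdot)\le c+E(\cdot)=E$ by the inductive hypothesis; and at an even-denominator position $p/q$ whose floor quotient $a=\lfloor p/q\rfloor$ is \emph{even} there is again no bump and both pass to the same $q/s$ with $s=p-aq$, giving $S(p/q)=a+S(q/s)\le a+E(q/s)=E(p/q)$. The only genuine case is an even-denominator position $p/q$ (so $p$ odd, $q$ even) whose floor quotient $a$ is \emph{odd}. Here the semi-even expansion keeps $a$ and continues from $q/s$, while the even expansion uses $a+1$ and continues from the complementary fraction $q/(q-s)$. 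Thus $S(p/q)=a+S(q/s)$ and $E(p/q)=(a+1)+E\bigl(q/(q-s)\bigr)$, and the whole statement reduces to the auxiliary bound
\[S(q/s)\le E\bigl(q/(q-s)\bigr)+1.\]

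The crux, and the step I expect to be the main obstacle, is exactly this auxiliary bound, which compares the semi-even expansion of a slope with the even expansion of its complement $s\mapsto q-s$. It is \emph{not} true that these two slopes have equal even-expansion crossing numbers (e.g. $E(8/1)=8$ but $E(8/7)=14$), so the inequality cannot be reduced to the main statement for a smaller input. Instead I would strengthen the induction hypothesis to assert the main inequality and this complementary bound \emph{simultaneously}, and prove them together by induction on the numerator. Establishing the auxiliary bound requires unwinding one further (even-constrained) step on each side; because the nearest-even quotients of $q/s$ and of $q/(q-s)$ depend on whether $s$ is smaller or larger than $q/2$, this produces a short case analysis, and the real work is to check that in every case the extra $+1$ carried by the even expansion is never consumed by the divergence of the two tails. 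The examples computed above (for instance $13/8$ and $15/8$, where the auxiliary bound is tight) suggest that the slack is exactly one crossing per odd floor quotient, so I expect the bookkeeping to close, with the verification of these tight cases being the delicate point.
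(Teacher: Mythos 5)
Your reduction is sound: writing both expansions as greedy division algorithms, observing that they can only diverge at an odd-position (even-denominator) step whose floor quotient $a$ is odd, and recursing via $S(p/q)=a+S(q/s)$ and $E(p/q)=(a+1)+E\bigl(q/(q-s)\bigr)$ correctly reduces the proposition to the auxiliary bound $S(q/s)\le E\bigl(q/(q-s)\bigr)+1$. The gap is that this auxiliary bound --- which you yourself identify as the crux --- is never proved; ``I expect the bookkeeping to close'' is where the entire content of the proposition lives. Worse, the bound as you pose it compares a semi-even expansion with the \emph{even} expansion of the complementary fraction, and you have already observed that even expansions of complementary fractions behave erratically (your example $E(8/1)=8$ versus $E(8/7)=14$). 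So the proposed ``unwind one even-constrained step on each side'' is not obviously a terminating strategy: after one step the two sides are again expansions of two unrelated smaller fractions, and it is not clear that your strengthened induction hypothesis is ever the statement you land on.

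The paper closes exactly this gap by a decomposition that avoids the mixed comparison. It interposes a hybrid expansion $C(p,q)$ that follows the even expansion up to the first divergence position $i$ and the semi-even expansion afterwards. Then $s(C)\le s(A)$ is immediate from the induction hypothesis, because $A$ and $C$ share a prefix and their tails are the even and semi-even expansions of the \emph{same} smaller fraction. The remaining inequality $s(B)\le s(C)$ reduces to comparing the \emph{semi-even} expansions of the complementary pair $P/Q$ and $P/(P-Q)$ (with $P$ even, $Q$ odd, $Q<P/2$), and here the paper gives a direct, non-inductive computation: if $B(P,Q)=[2,\alpha,\beta_1,\dots,\beta_k]$ then $B(P,P-Q)=[2,-(\alpha+1),-\beta_1,\dots,-\beta_k]$ in the case $Q>P/3$, and if $B(P,Q)=[2\alpha,\beta_1,\dots,\beta_k]$ then $B(P,P-Q)=[2,-1,-(2\alpha-2),-\beta_1,\dots,-\beta_k]$ in the case $Q<P/3$; either way the two crossing numbers differ by exactly one. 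Your auxiliary bound is in fact a consequence of these two facts, since $S(q/s)\le S\bigl(q/(q-s)\bigr)+1\le E\bigl(q/(q-s)\bigr)+1$, so the cleanest repair of your argument is to replace the mixed semi-even-versus-even lemma by the semi-even-versus-semi-even one and prove the latter by this explicit term-by-term relation.
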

\begin{proof}
Although the even expansion and the semi-even expansion are defined for fractions of the form {\it odd}/{\it even}, we extend them to fractions of the form {\it even}/{\it odd}. In this case, integers in odd positions of the semi-even expansion have to be even. For example, $8/3=[2,1,2]$. Note that the number of integers in the even expansion of {\it even}/{\it odd} becomes odd.  Let $F$ be the set of all continued fractions and $s:F\to\mathbb{Z}$ a map which counts the crossing number as follows:
\[s([a_1,\ldots,a_n])=\sum_{i=1}^n|a_i|.\]
Let $A(p,q)=[a_1,\ldots,a_n]$ be the even expansion, $B(p,q)=[b_1,\ldots,b_m]$ a semi-even expansion. We prove $s(B(p,q))\leq s(A(p,q))$ by induction on the size of the numerator. This formula for fractions with small numerators is ensured by
\[\frac{3}{2}=[2,-2]=[1,2],\quad\frac{4}{3}=[2,-2,2]=[2,-1,-2].\]
Suppose the formula holds for fractions with a numerator less than $p$. It is enough to show for the case $A(p,q)\neq B(p,q)$. Compare two expansions and assume they vary in the $i$-th position for the first time. Note that $i$ is odd. From the procedures of expansions, the integer in the $i$-th position of $A(p,q)$ is greater than that of $B(p,q)$ by exactly one. Let $C(p,q)$ be a new expansion which is the semi-even expansion from the $(i+1)$-th position of $A(p,q)$. In other words, $C(p,q)$ is the even expansion until the $i$-th position and the semi-even expansion from the $(i+1)$-th position. For example, in the case of $13/8$,
\[A(13,8)=[2,-2,-2,2],\quad B(13,8)=[1,2,-2,-2],\quad C(13,8)=[2,-2,-1,-2].\]
Now, we prove the following formula:
\[s(B(p,q))\leq s(C(p,q))\leq s(A(p,q)).\]
We focus on the first inequality because the second inequality comes from the hypothesis of induction. Considering the procedures of expansions in the $i$-th position, two fractions to expand in the next step of $B(p,q)$ and $C(p,q)$ are of the form $P/Q$, $P/(P-Q)$, where $P,Q\in\mathbb{Z}$, $0<Q<P$, $\gcd(P,Q)=1$, $P$ is even, and $Q$ is odd. Note that both fractions are chosen to be positive as signatures do not affect the number of crossings. For example, in the case of $13/8$, values are $i=1$, $P=8$, and  $Q=3$. Since $s(B(p,q))$ is less than $s(C(p,q))$ by exactly one up to the $i$-th position, it is enough to prove
\[|s(B(P,Q))-s(B(P,P-Q))|\leq1.\]
Since we are considering the pair of $P/Q$ and $P/(P-Q)$, we may assume $Q<P/2<P-Q$. Note that $B(P,P-Q)$ always starts from 2.
\begin{itemize}
\item \underline{The case $Q>P/3$}\\
$B(P,Q)$ starts from $2$. Using some integers $\alpha,\beta_1,\ldots,\beta_k$,
\[\frac{P}{Q}
=2+\frac{1}{\frac{Q}{P-2Q}}
=[2,\alpha,\beta_1,\ldots,\beta_k].\]
Then
\[\frac{P}{P-Q}
=2-\frac{1}{\frac{P-Q}{P-2Q}}
=2-\frac{1}{\frac{Q}{P-2Q}+1}
=[2,-(\alpha+1),-\beta_1,\ldots,-\beta_k].\]
Hence $|s(B(P,Q))-s(B(P,P-Q))|\leq1$ holds.
\item \underline{The case $Q<P/3$}\\
$B(P,Q)$ starts from $2\alpha$ $(\alpha\geq2)$. Using some integers $\beta_1,\ldots,\beta_k$,
\[\frac{P}{Q}
=2\alpha+\frac{1}{\frac{Q}{P-2\alpha Q}}
=[2\alpha,\beta_1,\ldots,\beta_k].\]
Then
\begin{eqnarray*}
\frac{P}{P-Q}
&=&2-\frac{1}{\frac{P-Q}{P-2Q}}
=2-\frac{1}{1+\frac{1}{\frac{P-2Q}{Q}}}
=2-\frac{1}{1+\frac{1}{2\alpha-2+\frac{1}{\frac{Q}{P-2\alpha Q}}}}\\
&=&[2,-1,-(2\alpha-2),-\beta_1,\ldots,-\beta_k].
\end{eqnarray*}
Hence $|s(B(P,Q))-s(B(P,P-Q))|\leq1$ holds.
\end{itemize}
Therefore we have $s(B(p,q))\leq s(C(p,q))\leq s(A(p,q))$ and this implies $s(B(p,q))\leq s(A(p,q))$.
\end{proof}
\section{Results of calculation for $c_2(K)$}
\label{sec:results}
Proposition \ref{prop:even_semieven} implies that semi-even expansion is superior to even expansion as an upper bound for $c_2(K)$. However, semi-even expansion does not cover all cases of Type $A$. Hence $c_2(K)$ is not determined immediately when the crossing number of the semi-even expansion is greater than or equal to $c(K)+2$. In that case, we have to investigate all possible patterns of expansions. Using the results so far, the algorithm for calculating $c_2(K)$ is given. This appears in Appendix. The author programmed this procedure using Python and calculated $c_2(K)$ from the data in KnotInfo \cite{knotinfo} up to 12 crossings. Another way to obtain the data for slopes of two-bridge knots whose crossing numbers are $n$ is as follows. List all positive expansions with $n$ crossings, calculate the slopes, and remove overlaps of knot types by Theorem \ref{thm:2b}. Theoretically, that list makes it possible to calculate $c_2(K)$ for arbitrarily large $n$, but needs much time. Table \ref{tb:ct} shows data up to 14 crossings. The results of the two methods up to 12 crossings were the same.\par
\begin{table}[htb]
	\centering
	\begin{tabular}{|c|c||c|c|c|c|c|}  \hline
		\multirow{2}{*}{$c(K)$} & \multirow{2}{*}{All knots} & \multirow{2}{*}{Two-bridge knots} & \multicolumn{4}{c|}{$c_2(K)$} \\ \cline{4-7}
		 & & & $c(K)$ & $c(K)+1$ & $c(K)+2$ & $c(K)+3$ \\ \hline \hline
		3 & 1 & 1 & 1 & & & \\ \hline
		4 & 1 & 1 & 1 & & & \\ \hline
		5 & 2 & 2 & 2 & & & \\ \hline
		6 & 3 & 3 & 2 & 1 & & \\ \hline
		7 & 7 & 7 & 7 & & & \\ \hline
		8 & 21 & 12 & 7 & 5 & & \\ \hline
		9 & 49 & 24 & 17 & 7 & & \\ \hline
		10 & 165 & 45 & 17 & 25 & 3 & \\ \hline
		11 & 552 & 91 & 44 & 36 & 11 & \\ \hline
		12 & 2176 & 176 & 49 & 98 & 26 & 3 \\ \hline
		13 & 9988 & 352 & 109 & 152 & 89 & 2 \\ \hline
		14 & 46972 & 693 & 128 & 351 & 177 & 37 \\ \hline
	\end{tabular}
	\caption{The number of knots whose $c_2(K)$ is $c(K)+j$ $(j=0,1,2,3)$ up to $14$ crossings.}
	\label{tb:ct}
\end{table}
Concerning results so far, we can improve the symmetry of two-bridge knots in Rolfsen's knot table \cite{rolfsen}. For knots $K$ such that $c_2(K)=c(K)$ and not drawn symmetrically, we can deform it into a symmetric diagram preserving the crossing number. 20 such examples are found up to 10 crossings. Figure \ref{fig:rolfsen} shows one of them. All examples are shown in Figure \ref{fig:rolfsen_sym_table}.\par
\begin{figure}[htbp]
\begin{center}
\includegraphics[scale=0.6]{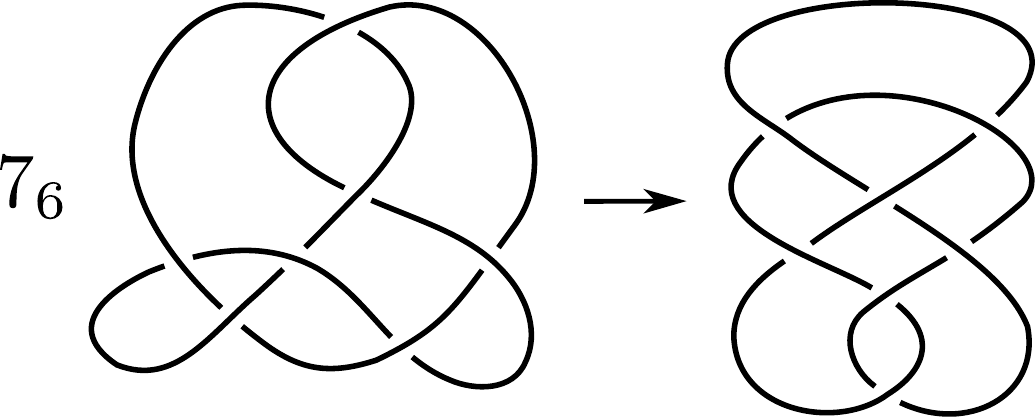}
\end{center}
\caption{Improvement of symmetry of $7_6$ in Rolfsen's knot table.\label{fig:rolfsen}}
\end{figure}
\begin{figure}[htbp]
\begin{center}
\includegraphics[scale=0.7]{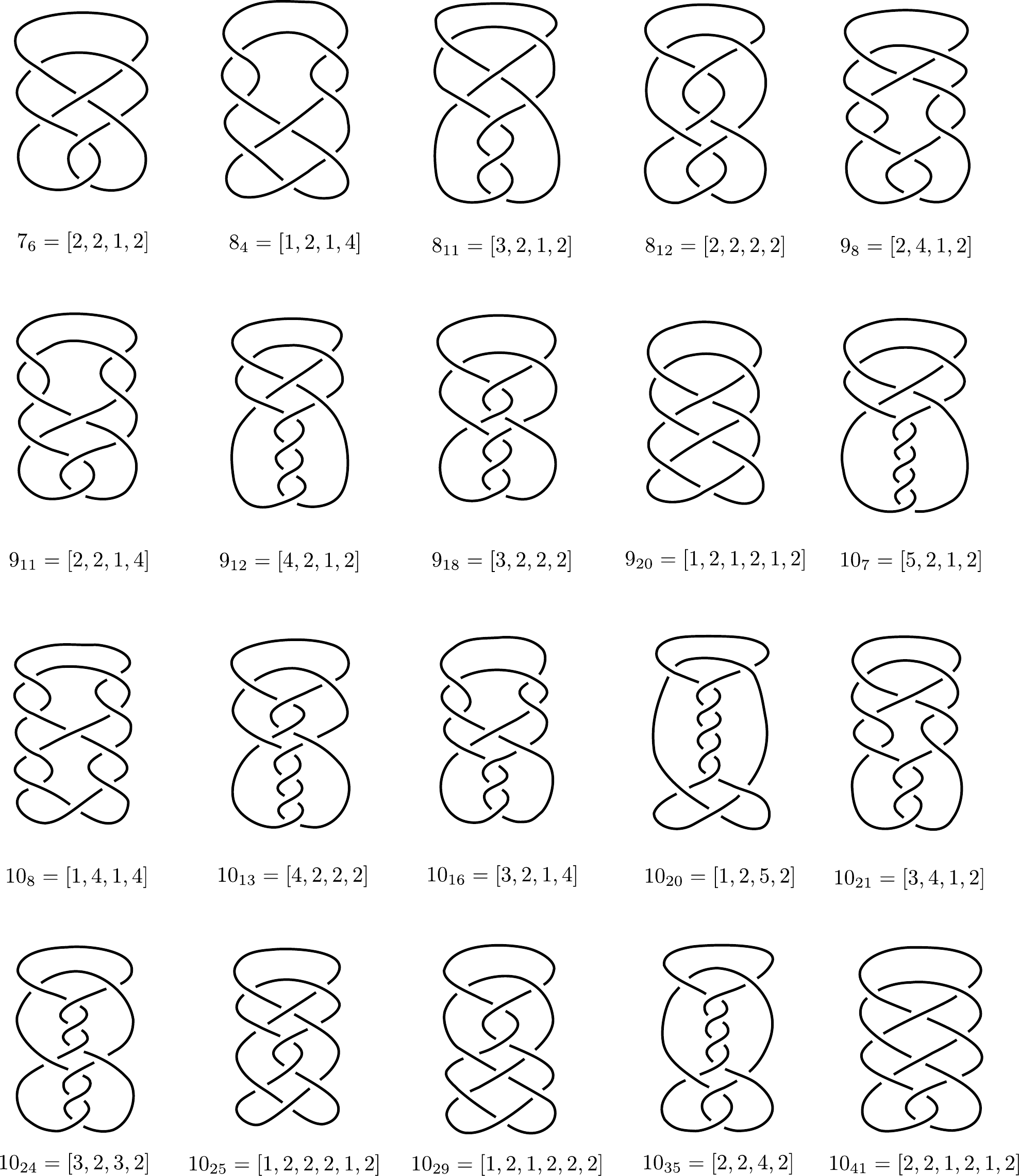}
\end{center}
\caption{Improvement of symmetry in Rolfsen's knot table.\label{fig:rolfsen_sym_table}}
\end{figure}
The value $c_2(K)$ tends to increase if $c(K)$ increases in Table \ref{tb:ct}, but it has not been proved yet. Unsolved questions regarding $c_2(K)$ and $c_t(K)$ are as follows.
\begin{qst}
\quad$(1)$ {\rm For any positive integer $n$, is there a knot $K$ such that $c_2(K)=c(K)+n$?}\\
\quad$(2)$ {\rm Is it possible to evaluate $c_2(K)$ in terms of $c(K)$? In particular, is the crossing number of semi-even expansion evaluated?}\\
\quad$(3)$ {\rm Can Lamm's conjecture (Conjecture $\ref{conj:lamm}$) be proved? If it is proved, $c_t(K)=c_2(K)$ for knots $K$ with $c_2(K)\leq c(K)+1$}.\\
\quad$(4)$ {\rm Can Lamm's conjecture be extended to the case $c_t(K)>c(K)$? If it is possible, $c_t(K)$ turns to be exactly the same as $c_2(K)$.}\\
\quad$(5)$ {\rm Is there any method to detect or evaluate $c_t(K)$ for general knots?}
\end{qst}
\section{Appendix}
\label{sec:appendix}
The algorithm to calculate $c_2(K)$ is described in this section. Let $K=K(p,q)$ $(p$ id odd, $q$ is even, $0<q<p$ and $\gcd(p,q)=1)$ be a two-bridge knot. $c_2(K)$ is determined by the following procedure. Figure \ref{fig:flow} shows the flowchart of the algorithm.\\
\begin{figure}[htbp]
\begin{center}
\includegraphics[scale=0.48]{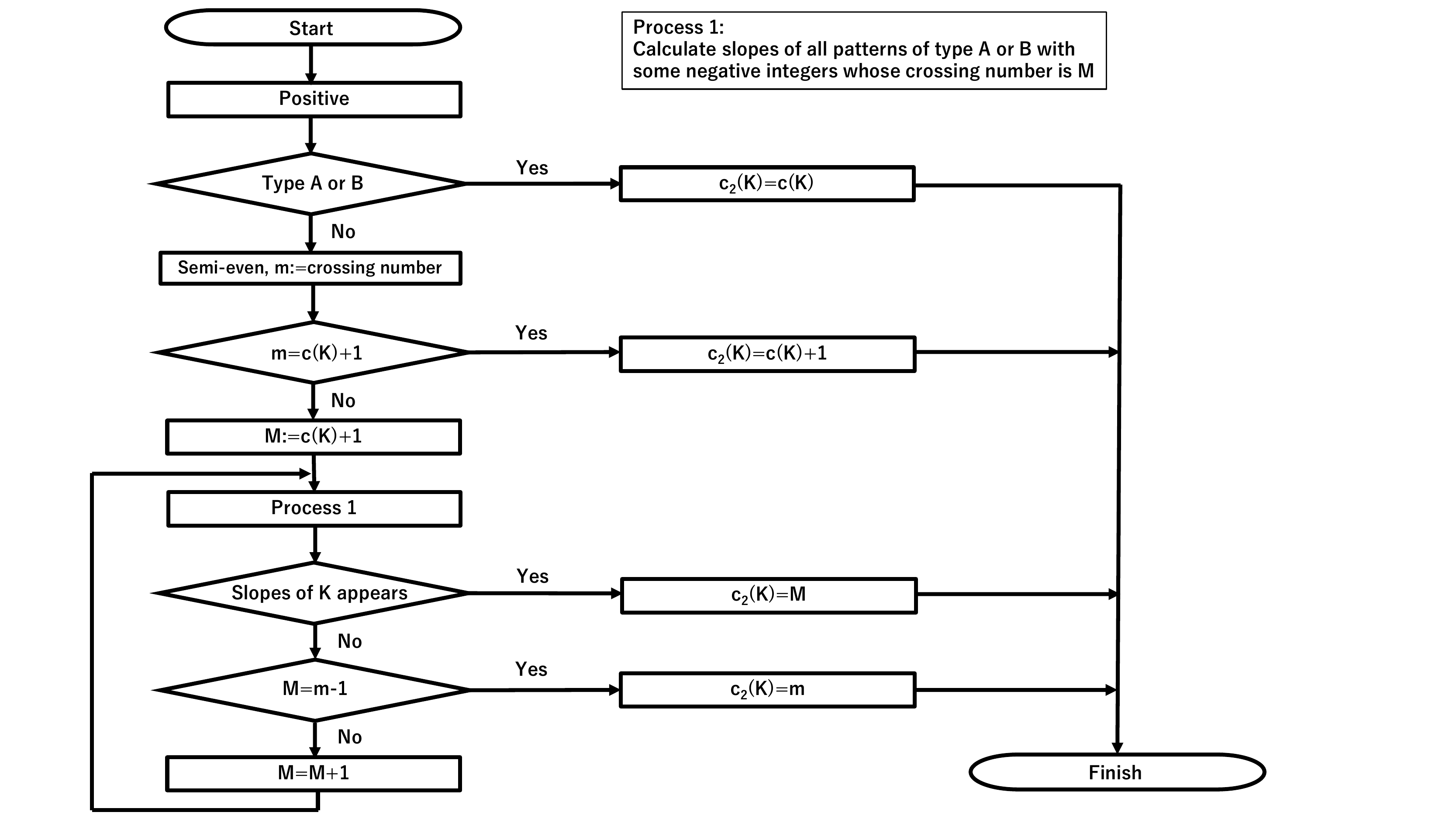}
\end{center}
\caption{Flowchart to calculate $c_2(K)$ for two-bridge knots.\label{fig:flow}}
\end{figure}
\quad {\it Step 1.}\quad Carry out the positive expansion of all four slopes of $K$. If one of them are either Type $A$ or $B$, then $c_2(K)=c(K)$. Otherwise $c_2(K)\geq c(K)+1$ and go on to Step\ $2$.\\
\quad {\it Step 2.}\quad Choose slopes of $K$ with even denominators and carry out the semi-even expansion. Let $m$ be the minimum crossing number of the expansions. Then $c_2(K)\leq m$. If $m=c(K)+1$, then $c_2(K)=c(K)+1$. If $m\geq c(K)+2$, then go on to Step\ $3$.\\
\quad {\it Step 3.}\quad List all expansions of Type $A$ or $B$ with positive integers whose sum is $c(K)+1$.\\
\quad {\it Step 4.}\quad Add some negative signs for each expansion and calculate the continued fraction for all patterns. If one of the four slopes of $K$ appears, $c_2(K)=c(K)+1$. Otherwise, go on to Step\ $5$.\\
\quad {\it Step 5.}\quad Add one more crossing and repeat from Step\ $3$. If $c_2(K)$ is not determined by $m-1$ crossings, $c_2(K)=m$.\\
\section*{Acknowledgement}
The author sincerely appreciates precious comments by Jun Murakami and Makoto Sakuma.

\end{document}